\documentclass[a4paper,12pt,reqno]{amsart}

\usepackage[T1]{fontenc}
\usepackage[utf8]{inputenc}
\usepackage{lmodern}
\usepackage[english]{babel}

\usepackage[%
	left=2.5cm,       
	right=2.5cm,      
	top=3.5cm,        
	bottom=3.5cm,     
	heightrounded,    
	bindingoffset=0mm 
]{geometry}

\usepackage{amsmath}
\usepackage{amssymb}
\usepackage{mathtools}
\usepackage{mathrsfs}
\usepackage[normalem]{ulem}

\usepackage{microtype}

\usepackage[
colorlinks=true,
urlcolor=teal,
citecolor=magenta,
linkcolor=teal,
breaklinks=true]
{hyperref}

\usepackage[noabbrev,capitalize,nameinlink]{cleveref}

\usepackage{bookmark}

\usepackage[initials,msc-links,
]{amsrefs}

\usepackage[dvipsnames]{xcolor}
\usepackage{graphicx}
\usepackage{url}

\usepackage{enumitem}

\numberwithin{equation}{section}

\theoremstyle{plain}
\newtheorem{theorem}{Theorem}[section]
\newtheorem{lemma}[theorem]{Lemma}

\theoremstyle{definition}

\newtheorem{remark}[theorem]{Remark}

\newcommand{\di}{\,\mathrm{d}}
\newcommand{\dive}{\mathrm{div}}
\newcommand{\N}{\mathbb{N}}
\newcommand{\R}{\mathbb{R}}

\DeclarePairedDelimiter{\set}{\{}{\}}

\usepackage[
final
]{showlabels}

\showlabels{bib}

\allowdisplaybreaks
\begin{document}

\title[Failure of the P\'olya--Szeg\H{o} inequality for the fractional gradient]{Failure of the P\'olya--Szeg\H{o} inequality \\ for the fractional gradient}

\author[G.~Stefani]{Giorgio Stefani}
\address[G.~Stefani]{Università degli Studi di Padova, Dipartimento di
Matematica ``Tullio Levi-Civita'', Via Trieste 63, 35121 Padova (PD), Italy}
\email{giorgio.stefani@unipd.it}

\date{\today}


\keywords{%
Fractional gradient, 
P\'olya--Szeg\H{o} inequality, 
symmetric decreasing rearrangement, 
fractional variation, 
Riesz potential}

\subjclass[2020]{Primary 46E35. Secondary 26A33, 26D10}

\thanks{\textit{Acknowledgements}.
The author is member of the Istituto Nazionale di Alta Matematica (INdAM),
Gruppo Nazionale per
l’Analisi Matematica, la Probabilità e le loro Applicazioni (GNAMPA), and has
received funding from INdAM under the INdAM--GNAMPA Project 2026 \textit{Metodi non locali classici e distribuzionali per problemi variazionali} (grant agreement No.\ CUP\_E53\-C25\-002\-010\-001), and from the European Union -- NextGenerationEU and the University of Padua under the 2023 STARS@UNIPD  Starting Grant Project \textit{New Directions in Fractional Calculus -- NewFrac} (grant agreement No.\ CUP\_C95\-F21\-009\-990\-001).
This work was partially finalized during the workshop \textit{Nonlocality in Image Denoising and Other Variational Problems}, held at the Department of Mathematics of the University of Pavia from 14 to 17 September 2026. The author would like to thank the organizer, Konstantinos Bessas, and the Department of Mathematics for their hospitality and for providing a stimulating and friendly atmosphere. 
}

\begin{abstract}
We prove that the P\'olya--Szeg\H{o} inequality for the $L^p$ norm of the Riesz fractional $s$-gra\-dient fails for every $p\in[1,2)$ and $s\in(0,1)$, thus providing a negative answer to a question posed by Nguyen and Squassina in 2017.
\end{abstract}

\maketitle

\section{Introduction}

\subsection{Setting}
Let $N\ge1$ and $s\in(0,1)$.
The \emph{Riesz fractional $s$-gradient} of $u\in\operatorname{Lip}_c(\R^N)$ is defined as
\begin{equation}
\label{eq:def_nablas}
\nabla^s u(x)
=
c_{N,s}
\int_{\R^N}\frac{(u(y)-u(x))(y-x)}{|y-x|^{N+s+1}}
\di y,
\quad
x\in\R^N,
\end{equation}
where $c_{N,s}>0$ is a normalization constant whose precise value will play no role.
Analogously, the \emph{Riesz fractional $s$-divergence} of $\varphi\in C^\infty_c(\R^N;\R^N)$ is defined as 
\begin{equation}
\label{eq:def_dives}
\dive^s\varphi(x)
=
c_{N,s}
\int_{\R^N}\frac{(\varphi(y)-\varphi(x))\cdot (y-x)}{|y-x|^{N+s+1}}
\di y,
\quad
x\in\R^N.
\end{equation}  
The operators~\eqref{eq:def_nablas} and~\eqref{eq:def_dives} obey the integration-by-parts formula
\begin{equation}
\label{eq:ibp}
\int_{\R^N}
u\,\dive^s\varphi
\di x
=
-
\int_{\R^N}
\varphi\cdot\nabla^s u\di x.
\end{equation}
We refer to~\cite{silhavy20} for a detailed account of~\eqref{eq:def_nablas}, \eqref{eq:def_dives} and~\eqref{eq:ibp}.
Formula~\eqref{eq:ibp} can be exploited to extend the definition~\eqref{eq:def_nablas} to $L^p$ functions, yielding distributional fractional analogues of  Sobolev and $BV$ spaces.
For a non-exhaustive list of contributions, we refer to~\cites{almi-et-al25a,almi-et-al25b,alicandro-et-al25,antil-et-al24,bellido-et-al20,bellido-et-al21,barrios-medina21,braides-et-al24,brusca-et-al26,caponi-et-al25,carrero-et-al25,duran-maione26,kreisbeck-schonberger22,liu-et-al24,liu-xiao22,lo-rodrigues23,schikorra-et-al17,schonberger24,shieh-spector15,shieh-spector18,spector19,spector20,brue-et-al22,comi-stefani19,comi-et-al22,comi-stefani22,comi-stefani23a,comi-stefani23b,comi-stefani24a,comi-stefani24b,stefani26} and to the references therein.

In~\cite{nguyen-squassina17}*{Open prob.~4.1}, the authors asked whether an analogue of the P\'olya--Szeg\H{o} inequality may hold for the fractional gradient~\eqref{eq:def_nablas} (see also~\cite{carbotti26}*{Sec.~8, point~2}).
Precisely, given $p\in[1,\infty)$, one may ask whether, for every non-negative $u\in C^\infty_c(\R^N)$,
\begin{equation}
\label{eq:polya-szego-s}
\int_{\R^N}|\nabla^s u^\star|^p\di x
\le 
\int_{\R^N}|\nabla^s u|^p\di x,
\end{equation} 
where $u^\star$ is the \emph{symmetric decreasing rearrangement} of $u$,  see~\cite{lieb-loss01}*{Ch.~3} for a detailed presentation.
As observed in~\cite{carbotti26}, inequality~\eqref{eq:polya-szego-s} is true for $p=2$, since
\begin{equation}
\label{eq:equivnorm}
\int_{\R^N}|\nabla^s u|^2\di x
=
\gamma_{N,s}
\iint_{\R^N\times\R^N}
\frac{|u(x)-u(y)|^2}{|x-y|^{N+2s}}
\di x\di y
=
\gamma_{N,s}
[u]_{W^{s,2}}^2
,
\end{equation}
where $\gamma_{N,s}>0$ is a normalization constant,
see~\cite{alicandro-et-al25}*{Prop.~2.8} for a proof.
Hence the case $p=2$ in~\eqref{eq:polya-szego-s} is equivalent to the classical fractional P\'olya--Szeg\H{o} inequality~\cite{almgren-lieb89} for the $W^{s,2}$ seminorm.
For $p\ne2$, instead, the equivalence with the $W^{s,p}$ seminorm  breaks down (see the discussion around~\cite{comi-stefani19}*{Prop.~3.24(a)}) and the validity of~\eqref{eq:polya-szego-s} remains open.

As in the classical setting,  inequality~\eqref{eq:polya-szego-s} would be extremely useful.
In particular, if true for $p=1$, then~\eqref{eq:polya-szego-s} would imply the (currently unknown) isoperimetric property of balls for the \emph{distributional fractional $s$-perimeter} introduced in~\cite{comi-stefani19}*{Sec.~4}; that is, for every $E\subset\R^N$ with $|E|\in(0,\infty)$,
\begin{equation}
\label{eq:iso}
\frac{|D^s\mathbf 1_{B_1}|(\R^N)}{|B_1|^{\frac{N-s}{N}}}
\le 
\frac{|D^s\mathbf 1_{E}|(\R^N)}{|E|^{\frac{N-s}{N}}}
\end{equation}
(see~\cite{comi-stefani19}*{Th.~4.4} for a non-sharp isoperimetric inequality).

\subsection{Main results}
We prove that~\eqref{eq:polya-szego-s} fails for every $p\in[1,2)$ and $s\in(0,1)$.
We first treat exponents close to~$1$ (see also \cref{rem:small_fail} below).

\begin{theorem}[Failure for $p$ close to $1$]
\label{res:small}
Let $s\in(0,1)$.
There exists $\varepsilon_{N,s}>0$ such that $u=\mathbf 1_{B_2}+\mathbf 1_{B_1(\mathrm e_1)}$ and $u^\star=\mathbf 1_{B_2}+\mathbf 1_{B_1}$ satisfy 
\begin{equation}
\label{eq:small}
\|\nabla^s u^\star\|_{L^p}
>
\|\nabla^s u\|_{L^p}
\quad
\text{for every}\ p\in[1,1+\varepsilon_{N,s}].
\end{equation}
\end{theorem}

The proof of \cref{res:small} is elementary and relies on the properties of the fractional variation of balls established in~\cites{comi-et-al22,comi-stefani24b}. The underlying idea is to superimpose a translated characteristic function on a larger ball and to observe that, at $p=1$, rearrangement aligns the corresponding fractional gradients and strictly increases the energy. Stability with respect to $p$ then yields the failure of~\eqref{eq:polya-szego-s} for $p>1$ sufficiently close to~$1$.
It is worth observing that \cref{res:small} does \emph{not} disprove~\eqref{eq:iso}. 

Since~\eqref{eq:polya-szego-s} holds for $p=2$ by~\eqref{eq:equivnorm}, we must let the counterexample depend on~$p$  as $p\to2^-$.
This leads to our second main result.

\begin{theorem}[Failure for every $p\in(1,2)$]
\label{res:big}
Let $s\in(0,1)$.
For every $p\in(1,2)$, there exists $u\in C^\infty_c(\R^N)$ such that $u^\star\in C^\infty_c(\R^N)$ and 
\begin{equation*}
\|\nabla^s u^\star\|_{L^p}
>
\|\nabla^s u\|_{L^p}.
\end{equation*}
\end{theorem}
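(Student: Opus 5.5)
We argue by perturbation around the Dirichlet case $p=2$, where equality-type information is available through~\eqref{eq:equivnorm}, and exploit the strict concavity gap between $L^p$ and $L^2$ for $p<2$. The idea is to choose a radial profile $v=v^\star\in C^\infty_c(\R^N)$ together with a family $u_t=v+t w$ of perturbations that are \emph{rearrangement-preserving to first order}. Concretely, we take $u_t=v\circ\Phi_t$, where $\Phi_t$ is the flow of a smooth, divergence-free, compactly supported vector field $X$. Then $u_t^\star=v$ for every $t$, because $\Phi_t$ preserves Lebesgue measure, and $u_t\in C^\infty_c$. It therefore suffices to find $v$ and $X$ such that $F(t)=\|\nabla^s(v\circ\Phi_t)\|_{L^p}^p$ satisfies $F(t)<F(0)$ for some small $t$. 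We will use $F'(0)$, or $F''(0)$ if $F'(0)=0$ by symmetry.

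\textbf{Step 1: first variation.} Writing $w=\nabla v\cdot X$, we have $\frac{d}{dt}\nabla^s u_t|_{t=0}=\nabla^s w$. Hence
\begin{equation*}
F'(0)=p\int_{\R^N}|\nabla^s v|^{p-2}\nabla^s v\cdot\nabla^s w\di x .
\end{equation*}
By~\eqref{eq:ibp} this equals $-p\int w\,\dive^s(|\nabla^s v|^{p-2}\nabla^s v)\di x$. For $p=2$ the field $\nabla^s v$ is radial, $\dive^s\nabla^s v=-(-\Delta)^s v$ is radial, and $\int \nabla v\cdot X\, g(|x|)\di x=0$ for divergence-free $X$. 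This is consistent with the $p=2$ inequality.

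For $p\neq2$ the function $h=\dive^s(|\nabla^s v|^{p-2}\nabla^s v)$ is still radial, since $\nabla^s$ commutes with rotations. Hence $F'(0)=-p\int (X\cdot\nabla v)\,h\di x$. Since $\nabla v$ is radial, $(X\cdot \nabla v)h=X\cdot\nabla G(|x|)$ for a suitable radial primitive $G$. The divergence-free property therefore kills $F'(0)$ as well. So radial $v$ with measure-preserving flows always give $F'(0)=0$, and we pass to the second variation.

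\textbf{Step 2: second variation.} We compute
\begin{equation*}
F''(0)=p\int |\nabla^s v|^{p-2}\Big(|\nabla^s w|^2+(p-2)\big(\tfrac{\nabla^s v}{|\nabla^s v|}\cdot\nabla^s w\big)^2\Big)+p\int|\nabla^s v|^{p-2}\nabla^s v\cdot\nabla^s z ,
\end{equation*}
where $z=\frac{d^2}{dt^2}u_t|_{t=0}$. The last term is handled as in Step 1: $z$ is again a rearrangement-preserving second-order variation, so it contributes $-p\int z h$ with $h$ radial. Expanding $\int (v\circ\Phi_t)\,\tilde h=\int v\,(\tilde h\circ\Phi_t^{-1})$ for a radial primitive $\tilde h$ yields an explicit quadratic form in $X$.

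The key mechanism for negativity is the factor $(p-2)<0$. We choose $v$ so that $\nabla^s w$ is nearly parallel to $\nabla^s v$ on the region where $|\nabla^s v|$ is large. Then the bracket is $\approx(p-1)|\nabla^s w|^2$, which is small when $p$ is close to $1$ but only a fraction when $p\to2$. This alone is not negative. One must compare with the $z$-term, which at $p=2$ exactly balances the first term because $F\equiv$ const is false but $F(t)\ge F(0)$ holds. In the $p=2$ case the Hessian is nonnegative, and we aim for directions $X$ where it is \emph{zero or arbitrarily small relative to} $\int|\nabla^s v|^{p-2}(\hat e\cdot\nabla^s w)^2$. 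The $(p-2)$ term then tips the sign.

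\textbf{Step 3: choice of $v$ and $X$.} Motivated by \cref{res:small}, we take $v$ a smoothed version of a two-level profile: a large bump plus a thin layer near a sphere. We take $X$ a rotation-like field localized near one point of the thin layer, rotating that piece tangentially, so that $w$ is concentrated and oscillatory. A scaling/concentration argument should show that the $p=2$ Hessian contribution is of lower order than the $(p-2)$ term. The main obstacle is precisely this quantitative comparison in Step 2: showing that for every fixed $p\in(1,2)$ some direction makes the full second variation strictly negative. If the general computation is too heavy, I would instead fall back on making the construction $p$-dependent, with the thin layer width $\delta=\delta(p)\to0$ as $p\to2$, and estimate $\nabla^s$ of near-indicators via the explicit ball formulas used for \cref{res:small}. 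The failure would then come from a region where $|\nabla^s v|$ is huge, weighted by the power $p-2<0$.
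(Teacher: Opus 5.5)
\textbf{There is a genuine gap: the key sign argument in Step~3 is missing, and the mechanism you propose cannot supply it.} The reduction itself is sound. Measure-preserving flows give $u_t^\star=v$, and you correctly see that $F'(0)=0$ whenever $v$ is radial. But you never establish that $F''(0)<0$; Step~3 only says a concentration argument ``should show'' it.

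The mechanism you propose for negativity does not work:
\begin{itemize}
\item Since $|\nabla^s w|^2+(p-2)\bigl(\hat e\cdot\nabla^s w\bigr)^2\ge(p-1)|\nabla^s w|^2$, the factor $p-2$ can never make the quadratic part negative.
\item $F_p''(0)$ is not ``the $p=2$ Hessian plus a $(p-2)$-correction''. The $z$-term involves $h_p=\dive^s(|\nabla^s v|^{p-2}\nabla^s v)$, not $-(-\Delta)^s v$.
\end{itemize}
Computing the $z$-term with $z=X\cdot\nabla(X\cdot\nabla v)$ and $\dive X=0$ gives
\[
-p\int_{\R^N}z\,h_p\di x=p\int_{\R^N}v'(|x|)\,h_p'(|x|)\,\Bigl(X\cdot\tfrac{x}{|x|}\Bigr)^2\di x .
\]
So $F''(0)<0$ is possible only if $h_p'>0$ somewhere on $\{v'<0\}$. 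Equivalently, the first-variation potential $-h_p$ must be strictly radially decreasing somewhere on the support of $\nabla v$. Proving such a monotonicity failure is the entire content of the theorem.

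Your heuristic also looks in the wrong place. Where $|\nabla^s v|$ is huge, the weight $|\nabla^s v|^{p-2}$ is small; the effect lives where $\nabla^s v$ vanishes. The near-indicator fallback is also problematic: $\nabla^s\mathbf 1_{B_r}\notin L^p$ for $p\ge 1/s$, a range that meets $(1,2)$ whenever $s>1/2$.

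\textbf{The missing idea, as in the paper, is to work at the center of a plateau.} Take $f$ radial decreasing with $f=1$ on $B_2$.
\begin{itemize}
\item Near the origin, $\nabla^s f(x)=-bx+O(|x|^3)$, so $-\dive(|\nabla^s f|^{p-2}\nabla^s f)=d|x|^{p-2}+g$ with $g\in C^1$.
\item Via $\nabla^s=I_{1-s}\nabla$, the first-variation potential near $0$ is $dV_\delta+H$. Here $V_\delta=I_{1-s}(|\cdot|^{p-2}\mathbf 1_{B_\delta})$ is strictly radially decreasing and $H\in C^1$.
\item The layer-cake formula gives $(V_\delta*\psi_r)(0)-(V_\delta*\psi_r)(re)\ge cr^{p-1-s}$. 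This beats the $O(r)$ contribution of $H$ because $p-1-s<1$.
\end{itemize}

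The paper also avoids second variations entirely. It compares the equimeasurable functions $f+\varepsilon\psi_r$ and $f+\varepsilon\psi_r(\cdot-re)$, linearizing in $\varepsilon$ around $f$ rather than around the rearranged function. The first-order term $p\varepsilon M_e(r)\gtrsim\varepsilon r^{p-1-s}$ therefore does not vanish. Meanwhile \cref{res:ele} bounds the remainder one-sidedly by $C_p\varepsilon^p r^{-N(p-1)-sp}$, so one fixes $r$ and then takes $\varepsilon$ small.

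This also avoids regularity problems your route would face anyway. For $p<2$, $|\xi|^p$ is not $C^2$ at $\xi=0$, which is exactly at the zeros of $\nabla^s v$ where the effect occurs. As written, neither your formula for $F''(0)$ nor the step from $F''(0)<0$ to $F(t)<F(0)$ is justified.
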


The proof of \cref{res:big} relies on the same perturbative mechanism underlying \cref{res:small}, but requires a more flexible construction. 
We start from a smooth radially symmetric decreasing function which is constant near the origin and perturb its plateau by a small translated bump. 
The fractional gradient near the center produces a singular leading term in the first variation, while the remaining contributions are of lower order. By first choosing the scale of the bump and then its amplitude, the leading term dominates the nonlinear remainder and yields a counterexample for every $p\in(1,2)$.

\section{Proofs of the results}

\subsection{Proof of \texorpdfstring{\cref{res:small}}{Theorem 1.1}}

In the proof of \cref{res:small} we exploit the following well-known facts.
Given $s\in(0,1)$, by~\cite{comi-et-al22}*{Cor.~1} and~\cite{comi-stefani24b}*{Prop.~1.15}, for each $c\in\R^N$ and $r>0$, we have that
\begin{equation}
\label{eq:contleb}
\nabla^s\mathbf 1_{B_r(c)}
\in 
C(\R^N\setminus\partial B_r(c))
\cap L^q(\R^N)
\quad
\text{for every}\ q\in\big[1,\tfrac1s\big),
\end{equation}
\begin{equation}
\label{eq:pos}
|\nabla^s\mathbf 1_{B_r(c)}(x)|
>0
\quad
\text{for every}\ x\in\R^N\setminus(\partial B_r(c)\cup\set*{c}),
\end{equation}
\begin{equation}
\label{eq:normal}
\nabla^s\mathbf 1_{B_r(c)}(x)
=
-\frac{x-c}{|x-c|}
\,
|\nabla^s\mathbf 1_{B_r(c)}(x)|
\quad
\text{for every}\ x\in\R^N\setminus(\partial B_r(c)\cup\set*{c})
.
\end{equation}
In addition, by~\cite{comi-et-al22}*{Th.~8}, if $u\in BV(\R^N)\cap L^\infty(\R^N)$, then $\nabla^s u\in L^p(\R^N)$ for every $p\in[1,1/s)$
and $s\in(0,1)$ and thus, by the Dominated Convergence Theorem,
\begin{equation}
\label{eq:pstab}
\lim_{p\to1^+}
\|\nabla^s u\|_{L^p}
=
\|\nabla^s u\|_{L^1}.
\end{equation} 

\begin{proof}[Proof of \cref{res:small}]
Since $B_1(\mathrm e_1)\subset B_2$, we have $u^\star=\mathbf 1_{B_2}+\mathbf 1_{B_1}$.
Owing to~\eqref{eq:normal},
\begin{equation*}
\begin{split}
\|\nabla^s u^\star\|_{L^1}
&=
\int_{\R^N}
|\nabla^s\mathbf 1_{B_2}(x)+\nabla^s\mathbf 1_{B_1}(x)|
\di x
=
\int_{\R^N}
\bigg|
\frac{x}{|x|}
\,
\Big(|\nabla^s\mathbf 1_{B_2}(x)|
+
|\nabla^s\mathbf 1_{B_1}(x)|
\Big)
\bigg|
\di x
\\
&=
\int_{\R^N}
|\nabla^s\mathbf 1_{B_2}(x)|
+
|\nabla^s\mathbf 1_{B_1}(x)|
\di x
=
\|\nabla^s\mathbf 1_{B_2}\|_{L^1}
+
\|\nabla^s\mathbf 1_{B_1}\|_{L^1}.
\end{split}
\end{equation*}
Hence, by translation invariance of the fractional gradient~\eqref{eq:def_nablas},
\begin{equation*}
\|\nabla^s u^\star\|_{L^1}
=
\|\nabla^s\mathbf 1_{B_2}\|_{L^1}
+
\|\nabla^s\mathbf 1_{B_1}\|_{L^1}
=
\|\nabla^s\mathbf 1_{B_2}\|_{L^1}
+
\|\nabla^s\mathbf 1_{B_1(\mathrm e_1)}\|_{L^1}.
\end{equation*}
As a consequence, by the triangular inequality,
\begin{equation*}
\begin{split}
\|\nabla^s u^\star\|_{L^1}
\ge 
\|\nabla^s\mathbf 1_{B_2}
+
\nabla^s\mathbf 1_{B_1(\mathrm e_1)}\|_{L^1}
=
\|\nabla^s u\|_{L^1}.
\end{split}
\end{equation*}
We claim that the above inequality is strict.
Indeed, by~\eqref{eq:normal}, we have that
\begin{equation*}
\nabla^s \mathbf 1_{B_2}(\mathrm e_1/2)
=
-\mathrm e_1
\,
|\nabla^s\mathbf 1_{B_2}(\mathrm e_1/2)|
\quad
\text{and}
\quad
\nabla^s\mathbf 1_{B_1(\mathrm e_1)}(\mathrm e_1/2)
=
\mathrm e_1
\,
|\nabla^s\mathbf 1_{B_1(\mathrm e_1)}(\mathrm e_1/2)|.
\end{equation*}
Hence, owing to~\eqref{eq:pos},
\begin{equation*}
\begin{split}
|\nabla^s\mathbf 1_{B_2}(\mathrm e_1/2)|
+
|\nabla^s\mathbf 1_{B_1(\mathrm e_1)}(\mathrm e_1/2)|
&>
\big|
|\nabla^s\mathbf 1_{B_2}(\mathrm e_1/2)|
-
|\nabla^s\mathbf 1_{B_1(\mathrm e_1)}(\mathrm e_1/2)|
\big|
\\
&=
|\nabla^s\mathbf 1_{B_2}(\mathrm e_1/2)
-
\nabla^s\mathbf 1_{B_1(\mathrm e_1)}(\mathrm e_1/2)|.
\end{split}
\end{equation*}
Thus, by the continuity in~\eqref{eq:contleb}, for some open neighborhood $U\subset\R^N$ of $\mathrm e_1/2$,
\begin{equation*}
|\nabla^s\mathbf 1_{B_2}(x)|
+
|\nabla^s\mathbf 1_{B_1(\mathrm e_1)}(x)|
>
|\nabla^s\mathbf 1_{B_2}(x)
+
\nabla^s\mathbf 1_{B_1(\mathrm e_1)}(x)|
\quad
\text{for every}\ 
x\in U.
\end{equation*}
Integrating the previous strict inequality yields
\begin{equation}
\label{eq:uno}
\|\nabla^s u^\star\|_{L^1}
>
\|\nabla^s u\|_{L^1}.
\end{equation}
The conclusion hence follows by combining~\eqref{eq:uno} with~\eqref{eq:pstab}.
\end{proof}

\begin{remark}
\label{rem:small_fail}
Let $L^{s,p}(\R^N)$ be the completion of $C^\infty_c(\R^N)$ with respect to the norm $u\mapsto\|u\|_{L^p}+\|\nabla^s u\|_{L^p}$. 
For every $s\in(0,1)$ and $p\in(1,\infty)$, $L^{s,p}(\R^N)$ coincides with the \emph{Bessel potential space}, see~\cites{comi-stefani19,brue-et-al22,kreisbeck-schonberger22} (see~\cite{comi-stefani19}*{Th.~3.23} for $p=1$).
Since $u,u^\star\in BV(\R^N)\cap L^\infty(\R^N)$ in \cref{res:small} have compact support, a plain convolution argument proves that $u,u^\star\in L^{s,p}(\R^N)$ for every $s\in(0,1)$ and $p\in[1,2)$ such that $sp<1$.
Hence, for each $s\in(0,1)$ and $p\in[1,2)$ such that $sp<1$, there is $(v_k)_{k\in\N}\subset C^\infty_c(\R^N)$ such that
\begin{equation*}
\lim_{k\to\infty}\|\nabla^s v_k\|_{L^p}
=
\|\nabla^s u\|_{L^p}
\quad
\text{and}
\quad
\|\nabla^s u^\star\|_{L^p}
\le
\liminf_{k\to\infty}\|\nabla^s v_k^\star\|_{L^p}.
\end{equation*}
This proves that, for every $s\in(0,1)$, there exists $\eta_{N,s}>0$ with the following property: for each $p\in[1,1+\eta_{N,s}]$, there exists $v\in C^\infty_c(\R^N)$ such that 
$\|\nabla^sv\|_{L^p}<\|\nabla^sv^\star\|_{L^p}$.
This disproves~\eqref{eq:polya-szego-s} for every $p\in[1,2)$ sufficiently close to~$1$.
\end{remark}

\subsection{Proof of \texorpdfstring{\cref{res:big}}{Theorem 1.2}}

We begin with three preliminary lemmas. 

\begin{lemma}\label{lem:local-gradient}
Let $f\in C^\infty_c(\R^N)$ be a radially symmetric decreasing function such that
$0\leq f\leq1$, $f=1\ \text{on}\ B_2$ and $
\operatorname{supp}f\subset B_3$.
Then, there exists $b>0$ such that
\begin{equation}
\label{eq:local-nabla}
\nabla^s f(x)=-bx+O(|x|^3)
\quad
\text{as }x\to0.
\end{equation}
Consequently, letting $d=(N+p-2)b^{p-1}>0$, there exist an open neighborhood $U$ of the origin and $g\in C^1(U)$ such that $g(x)=O(|x|^p)$, $\nabla g(x)=O(|x|^{p-1})$ as $x\to0$ and 
\begin{equation}\label{eq:local-divA}
-\dive\big(|\nabla^sf|^{p-2}\,\nabla^sf\big)(x)
=
d|x|^{p-2}+g(x)
\end{equation}
pointwise for every $x\in U\setminus\set*{0}$ and in the distributional sense on $U$.
\end{lemma}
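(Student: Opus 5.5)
Since $f$ is radial and $\nabla^s$ commutes with rotations, $\nabla^s f(x)=-\phi(|x|)\,x/|x|$ for some scalar profile. Rather than work with this profile directly, I would use the structure of $f$ near the origin. For $|x|<1$ and $|y-x|<1$ we have $f(y)=f(x)=1$, so the integrand in \eqref{eq:def_nablas} vanishes there, and
\begin{equation*}
\nabla^s f(x)=c_{N,s}\int_{\R^N}\frac{(f(y)-1)\,(y-x)}{|y-x|^{N+s+1}}\di y,
\end{equation*}
where $f(y)-1$ is supported in $\R^N\setminus B_2$. For $|x|<1$ the kernel $K(z)=z|z|^{-N-s-1}$ is smooth on that region, with integrable decay $|z|^{-N-s}$ at infinity, and $f-1$ is bounded. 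Differentiation under the integral is therefore justified, so $F:=\nabla^s f$ is $C^\infty$ (indeed real-analytic) on $B_1$. Moreover $F$ is odd, since $f$ is even, so its Taylor expansion at $0$ has only odd-degree terms. By rotation invariance the linear term is a multiple of the identity, so $F(x)=-bx+O(|x|^3)$ with
\begin{equation*}
b=-\tfrac1N\,\dive F(0).
\end{equation*}

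The key step is to show $b>0$. Differentiating under the integral gives
\begin{equation*}
\dive F(0)=c_{N,s}\int (f(y)-1)\,\dive_x\big((y-x)|y-x|^{-N-s-1}\big)\big|_{x=0}\di y .
\end{equation*}
Here $\dive_x\big((y-x)|y-x|^{-N-s-1}\big)=-\dive_z K(y)$ with $z=y-x$, and $\dive_z(z|z|^{-N-s-1})=(N-(N+s+1))|z|^{-N-s-1}=-(s+1)|z|^{-N-s-1}$. Hence
\begin{equation*}
\dive F(0)=c_{N,s}(s+1)\int\frac{f(y)-1}{|y|^{N+s+1}}\di y<0,
\end{equation*}
because $f\le1$ and $f<1$ outside $B_3$. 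This gives $b>0$. The same differentiation argument also shows that the remainder $R(x):=F(x)+bx$ is smooth with $R=O(|x|^3)$, $DR=O(|x|^2)$ and $D^2R=O(|x|)$.

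For \eqref{eq:local-divA}, write $F=-b|x|\,e(x)\,(1+h(x))$ in the radial direction. By radiality $F(x)=-\phi(|x|)\,x/|x|$ exactly, with $\phi(r)=br+O(r^3)$ smooth and odd in $r$. Then
\begin{equation*}
|F|^{p-2}F=-\phi(|x|)^{p-1}\,\frac{x}{|x|}.
\end{equation*}
For a radial field $\psi(r)\,x/|x|$ the divergence is $\psi'(r)+\frac{N-1}{r}\psi(r)$. With $\psi=\phi^{p-1}=b^{p-1}r^{p-1}(1+O(r^2))^{p-1}$ this yields
\begin{equation*}
-\dive\big(|F|^{p-2}F\big)=(p-1+N-1)\,b^{p-1}r^{p-2}+O(r^{p})=d|x|^{p-2}+g(x),
\end{equation*}
where $g$ collects the terms $r^{p-2}\cdot O(r^2)$. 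To get the bounds on $\nabla g$, I would write $\psi(r)=b^{p-1}r^{p-1}\big(1+r^2\eta(r^2)\big)$ with $\eta$ smooth, using that $\phi(r)/r$ is smooth and even. Then $g(x)=|x|^{p}\,G(|x|^2)$ with $G$ smooth, so $g\in C^1(U)$ (since $p>1$), with $g=O(|x|^p)$ and $\nabla g=O(|x|^{p-1})$. This step requires only elementary bookkeeping.

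Finally, the distributional identity on $U$. The field $V=|F|^{p-2}F$ is continuous with $|V|=O(|x|^{p-1})$, and the right-hand side is locally integrable since $p-2>-N$ (for $N=1$ this uses $p>1$). Integrating by parts on $U\setminus B_\varepsilon$ against a test function, the boundary term is bounded by $\|\varphi\|_\infty\,\varepsilon^{N-1}\varepsilon^{p-1}\to0$. The pointwise identity then passes to the distributional one. I expect the main obstacle to be the rigorous justification of $b>0$, i.e.\ the sign computation of $\dive F(0)$ together with differentiation under the integral. That is where I would be careful, in particular checking that the kernel divergence computation is correct away from the singularity, which holds because $|y|\ge2$ on the support of $f-1$.
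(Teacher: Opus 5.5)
Your proposal is correct and follows essentially the same route as the paper: smoothness and oddness of $F=\nabla^s f$ on $B_1$ by differentiating under the integral, $DF(0)=-b\,\mathrm{Id}$ with $b=c_{N,s}\frac{1+s}{N}\int_{\R^N}\frac{1-f(y)}{|y|^{N+s+1}}\di y>0$, the radial representation $F(x)=-\lambda(|x|)\,x$ (your $\phi(r)=r\lambda(r)$) combined with the radial divergence formula, and the same $\varepsilon^{N+p-2}$ boundary-term estimate for the distributional identity. The only cosmetic difference is in how you get $b$: you take the trace $\dive F(0)$ and use rotation equivariance to conclude $DF(0)$ is a multiple of the identity, whereas the paper computes the Jacobian entrywise using the second-moment identity $\int_{\R^N}(1-f)\,y_iy_j|y|^{-N-s-3}\di y=\frac{\delta_{ij}}{N}\int_{\R^N}(1-f)|y|^{-N-s-1}\di y$.
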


\begin{proof}
We divide the proof in two parts.

\vspace{1ex}

\textit{Part 1: proof of~\eqref{eq:local-nabla}}.
Let us set $F(x)=\nabla^s f(x)$ for every $x\in\R^N$ for brevity.
Since $f=1$ in $B_2$, we have $F\in C^\infty(B_1)$ with $F(0)=0$, because $f$ is radially symmetric.
Moreover, by differentiating under the integral sign, 
for each $i,j\in\set*{1,\dots,N}$, we get
\begin{equation*}
\partial_j
F_i(0)
=
c_{N,s}
\int_{\R^N}
(1-f(y))
\,
\bigg(
\frac{\delta_{ij}}{|y|^{N+s+1}}
-
(N+s+1)\,\frac{y_i\,y_j}{|y|^{N+s+3}}
\bigg)
\di y.
\end{equation*}
Again by the radial symmetry of $f$,
\begin{equation*}
\int_{\R^N}
(1-f(y))
\,
\frac{y_i\, y_j}{|y|^{N+s+3}}
\di y
=
\frac{\delta_{ij}}{N}
\int_{\R^N}
\frac{1-f(y)}{|y|^{N+s+1}}
\di y,
\end{equation*}
and thus $DF(0)=-b\,\mathrm{Id}$, where 
\begin{equation*}
b
=
c_{N,s}\,\frac{1+s}{N}
\int_{\R^N}
\frac{1-f(y)}{|y|^{N+s+1}}
\di y
>0
.
\end{equation*}
In addition, since $F$ is odd by definition, we also have that $D^2F(0)=0$. 
As a consequence, 
\begin{equation*}
F(x)=\nabla^s f(x)=-bx+O(|x|^3)
\quad
\text{as}\ x\to0,
\end{equation*} 
concluding the proof of~\eqref{eq:local-nabla}.

\vspace{1ex}

\textit{Part 2: proof of~\eqref{eq:local-divA}}. 
Again by the radial symmetry of~$f$, for every $\mathcal R\in\mathrm{O}(N)$, we have that $F(\mathcal R(x))=\mathcal R(F(x))$ 
for every $x\in B_1$.
As a consequence, there exists a smooth function $\lambda\colon[0,1]\to[0,\infty)$ such that 
\begin{equation}
\label{eq:radod}
F(x)=-\lambda(|x|)\,x
\quad
\text{for every}\ x\in B_1.
\end{equation}
We also note that $\lambda(0)=b$ and that $\lambda'(0)=0$, because the function $t\mapsto F(t\mathrm e_1)=\lambda(|t|)\,t\mathrm e_1$ is smooth (and odd) for every $t\in(-1,1)$.
Now let $A(x)=|F(x)|^{p-2}\,F(x)$ for every $x\in\R^N$ for brevity, with the convention that $A(x)=0$ for every $x\in\R^N$ such that $F(x)=0$.
Being $p>1$ by assumption, we have that $A\in C_b(\R^N)$.
By~\eqref{eq:radod}, we can rewrite $A(x)=-\lambda(|x|)^{p-1}|x|^{p-2}\,x$ for every $x\in B_1$. 
Letting $w(r)=\lambda(r)^{p-1}$ for every $r\in[0,1]$,
we hence get that, for every $x\in B_1\setminus\set*{0}$, 
\begin{equation*}
-
\dive(A(x))
=
\dive\big(\lambda(|x|)^{p-1}|x|^{p-2}\,x\big)
=
(N+p-2)\,w(|x|)|x|^{p-2}
+
w'(|x|)\,|x|^{p-1}.
\end{equation*}
As a consequence, we thus get that 
\begin{equation}
\label{eq:diva}
-\dive(A(x))
=
d|x|^{p-2}
+
g(x)
\quad
\text{for every}\ x\in B_1\setminus\set*{0},
\end{equation}
where $d=(N+p-2)b^{p-1}>0$ and $g\colon B_1\setminus\set*{0}\to\R$ is defined, for every $x\in B_1\setminus\set*{0}$,  as
\begin{equation}
\label{eq:restoq}
g(x)
=
(N+p-2)
\big(
w(|x|)-b^{p-1}
\big)
\,|x|^{p-2}
+
w'(|x|)\,|x|^{p-1}.
\end{equation}
Since  $w(0)=\lambda(0)^{p-1}=b^{p-1}$ and 
$w'(0)
=(p-1)\,b^{p-2}\,\lambda'(0)=0$, we have 
\begin{equation}
\label{eq:wzero}
w(r)=b^{p-1}+O(r^2)
\quad
\text{as}\ r\to0^+.
\end{equation}
In addition, being $\lambda'(0)=0$, we have $\lambda'(r)=O(r)$ as $r\to0^+$ and thus 
\begin{equation}
\label{eq:wuno}
w'(r)=(p-1)\,\lambda(r)^{p-2}\,\lambda'(r)
=
O(\lambda'(r))
=
O(r)
\quad
\text{as}\ r\to0^+.
\end{equation}
Similarly, we can estimate
\begin{equation}
\label{eq:wdue}
w''(r)
=
(p-1)(p-2)\,\lambda(r)^{p-3}\,\lambda'(r)^2
+
(p-1)\,\lambda(r)^{p-2}\,\lambda''(r)
=
O(1)
\quad
\text{as}\ r\to0^+.
\end{equation}
As a consequence, by exploiting~\eqref{eq:wzero}, \eqref{eq:wuno} and~\eqref{eq:wdue} in~\eqref{eq:restoq}, we infer that 
\begin{equation*}
|g(x)|
=
O(|x|^p)
\quad
\text{and}
\quad
\nabla g(x)
=
O(|x|^{p-1})
\quad
\text{as}\ x\to0.
\end{equation*}
Therefore, defining $g(0)=0$, we obtain that $g\in C^1(U)$ for some open neighborhood $U\subset\R^N$ of the origin.
This concludes the proof of the validity of~\eqref{eq:local-divA} in the pointwise sense on $U\setminus\set*{0}$.
To prove the validity of~\eqref{eq:local-divA} in the weak sense on $U$, it is enough to observe that $|A(x)|=O(|x|^{p-1})$ for $x\in B_1$ by definition and that, given $\varphi\in C^\infty_c(\R^N)$ with $\operatorname{supp}\varphi\subset B_\rho$ for some $B_\rho\Subset U$, we can estimate
\begin{equation*}
\bigg|
\int_{B_\rho\setminus B_\varepsilon}
A\cdot\nabla\varphi\di x
+
\int_{B_\rho\setminus B_\varepsilon}
\varphi\,\dive A
\di x
\bigg|
=
\bigg|
\int_{\partial B_\varepsilon}
\varphi\,A\cdot\nu_{B_\varepsilon}\di\mathscr{H}^{N-1}
\bigg|
\le 
C\|\varphi\|_{L^\infty}
\varepsilon^{N+p-2}
\end{equation*}
for every $\varepsilon\in(0,\rho)$, which vanishes as $\varepsilon\to0^+$. This concludes the proof.
\end{proof}

\begin{lemma}\label{lem:local-first-variation}
With $f\in C^\infty_c(\R^N)$ and $d>0$ as in \cref{lem:local-gradient}, there exist $\delta>0$ and $H\in C^1(B_{\delta/2})$ such that
\begin{equation}\label{eq:local-first-variation}
\int_{\R^N}
|\nabla^sf|^{p-2}\,\nabla^sf\cdot\nabla^s\varphi
\di x
=
\int_{\R^N}
\bigl(dV_\delta+H\bigr)\varphi
\di x
\end{equation}
for every $\varphi\in C^\infty_c(B_{\delta/2})$, where
$
V_\delta
=
I_{1-s}
\bigl(
|\cdot|^{p-2}\,\mathbf1_{B_\delta}
\bigr).
$
\end{lemma}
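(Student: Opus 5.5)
Write $A=|\nabla^sf|^{p-2}\nabla^sf$. The plan is to move the fractional gradient from $\varphi$ onto $A$, and then to localize. The key structural fact is the factorization $\nabla^s=\nabla I_{1-s}$, and dually $\dive^s=\dive I_{1-s}=I_{1-s}\dive$ (see~\cite{comi-stefani19}, or check it via Fourier multipliers). By~\eqref{eq:ibp}, or directly by Fubini using the symmetry of the Riesz kernel,
\begin{equation*}
\int_{\R^N}A\cdot\nabla^s\varphi\di x
=
\int_{\R^N}A\cdot\nabla I_{1-s}\varphi\di x
=
-\int_{\R^N}\dive A\;I_{1-s}\varphi\di x
=
\int_{\R^N}I_{1-s}(-\dive A)\,\varphi\di x ,
\end{equation*}
with $\dive A$ understood distributionally. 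This formal identity has to be justified, since $A$ is only bounded, with decay $|A(x)|\lesssim |x|^{-(N+s)(p-1)}$ at infinity, and $I_{1-s}\varphi$ decays like $|x|^{-(N-1+s)}$. I would therefore split $A$ with a cutoff rather than compute $\dive A$ globally.

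Fix $\delta>0$ small so that $B_{2\delta}\subset U$, where $U$ is from \cref{lem:local-gradient}. Take $\chi\in C^\infty_c(B_\delta)$ with $\chi=1$ on $B_{3\delta/4}$, and write $A=\chi A+(1-\chi)A$.

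\emph{Near part.} The field $\chi A$ is compactly supported and, by \cref{lem:local-gradient}, has distributional divergence
\begin{equation*}
-\dive(\chi A)=\chi\,(d|x|^{p-2}+g)-\nabla\chi\cdot A .
\end{equation*}
This is an $L^1$ function with compact support, since $p-2>-1\ge -N$. For compactly supported fields the integration by parts above is legitimate (mollify $\chi A$ and pass to the limit). Hence the near part contributes $I_{1-s}$ of this function. Since $\chi=\mathbf 1_{B_\delta}$ except on the annulus $B_\delta\setminus B_{3\delta/4}$, where $|x|^{p-2}$ is smooth, we get
\begin{equation*}
I_{1-s}\bigl(\chi d|x|^{p-2}\bigr)=dV_\delta-d\,I_{1-s}\bigl((1-\chi)|x|^{p-2}\mathbf 1_{B_\delta}\bigr).
\end{equation*}
The subtracted function is supported in $\overline{B_\delta}\setminus B_{3\delta/4}$, so its potential is smooth on $B_{\delta/2}$. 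The term $\nabla\chi\cdot A$ is also supported away from $B_{\delta/2}$, so its potential is smooth there too. The remaining term $I_{1-s}(\chi g)$ is $C^1$ because $\chi g\in C^1_c$ and $I_{1-s}$ commutes with derivatives of $C^1_c$ functions.

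\emph{Far part.} For $(1-\chi)A$, whose support avoids $B_{3\delta/4}$, I would not integrate by parts. I would write $\nabla^s\varphi(x)$ using the kernel directly: since $\varphi$ is supported in $B_{\delta/2}$, for $|x|\ge 3\delta/4$ we have
\begin{equation*}
\nabla^s\varphi(x)=c_{N,s}\int\varphi(y)\,\frac{y-x}{|y-x|^{N+s+1}}\di y ,
\end{equation*}
with a kernel that is smooth because $|x-y|\ge\delta/4$. Fubini then gives $\int H_2\varphi$, where
\begin{equation*}
H_2(y)=c_{N,s}\int (1-\chi(x))\,A(x)\cdot\frac{y-x}{|y-x|^{N+s+1}}\di x .
\end{equation*}
This is smooth on $B_{\delta/2}$ provided the integral converges. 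That requires $|A(x)|\,|x|^{-N-s}$ to be integrable at infinity, which holds since $A$ is bounded. The sum of all the smooth remainder terms is $H$.

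The main obstacle is justifying the distributional integration by parts for $\chi A$, given the singular divergence $d|x|^{p-2}$ at the origin. \cref{lem:local-gradient} supplies exactly this: the identity holds distributionally on $U$, and $|x|^{p-2}$ is in $L^1$. Combined with the convolution form of $I_{1-s}$ and Fubini, which applies because all kernels are locally integrable, this closes the argument.
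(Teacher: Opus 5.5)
Your proposal is correct and follows essentially the same route as the paper: you split $A=\chi A+(1-\chi)A$, handle the near part via $\nabla^s\varphi=\nabla I_{1-s}\varphi$ and the distributional identity~\eqref{eq:local-divA}, and handle the far part through the kernel representation of $\nabla^s\varphi$ off its support together with Fubini. The only differences are cosmetic: your cutoff lives inside $B_\delta$ instead of equalling $1$ on $B_{2\delta}$, so the correction $-d\,I_{1-s}\bigl((1-\chi)|\cdot|^{p-2}\mathbf 1_{B_\delta}\bigr)$ is subtracted rather than added, and $\nabla\chi\cdot A$ only needs to be bounded and supported away from $B_{\delta/2}$, whereas the paper absorbs it into a $C^1_c$ function $G$.
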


\begin{proof}
We keep the same notation of the proof of \cref{lem:local-gradient} and set $F=\nabla^s f\in C^\infty_b(\R^N)$ and $A=|F|^{p-2}F\in C_b(\R^N)$. 
Let $\delta>0$ be sufficiently small that~\eqref{eq:local-divA} holds with 
$U=B_{4\delta}$ and fix a cut-off $\chi\in C^\infty_c(B_{3\delta})$ such that $0\le\chi\le1$ and $\chi=1$ on $B_{2\delta}$.
We decompose $A=A_0+A_1$, where $A_0=\chi A$ and $A_1=(1-\chi)A$.
On the one side, since  $A_0\in W^{1,1}(\R^N;\R^N)$ has compact support, for every
$\varphi\in C^\infty_c(B_{\delta/2})$ we have that
\begin{equation}
\label{eq:banana}
\int_{\R^N}A_0\cdot\nabla^s\varphi\di x
=
\int_{\R^N}A_0\cdot\nabla(I_{1-s}\varphi)\di x
=
-
\int_{\R^N}
I_{1-s}\varphi
\,
\dive A_0
\di x.
\end{equation}
Here we have exploited the representation $\nabla^s=I_{1-s}\nabla$ (see~\cite{comi-stefani19}*{Prop.~2.2} for instance), where $I_\alpha$ is the \emph{Riesz potential} of order $\alpha\in(0,N)$.  
By~\eqref{eq:local-divA} in \cref{lem:local-gradient}, we have 
\begin{equation*}
-\dive A_0(x)
=
d\chi(x)|x|^{p-2}
+
G(x)
\quad
\text{for every}\ x\in\R^N,
\end{equation*}
where we have set $G=\chi\,g-\nabla\chi\cdot A\in C^1_c(\R^N)$.
Hence, recalling~\eqref{eq:banana}, we get 
\begin{equation}
\label{eq:acca1}
\begin{split}
\int_{\R^N}A_0
&
\cdot\nabla^s\varphi\di x
=
d
\int_{\R^N}
\varphi
\,
I_{1-s}\big(|\cdot|^{p-2}\chi\big)
\di x
+
\int_{\R^N}
\varphi\,I_{1-s}G\di x
\\
&=
d
\int_{\R^N}
\varphi
\,
I_{1-s}\big(|\cdot|^{p-2}\mathbf 1_{B_\delta}\big)
\di x
+
\int_{\R^N}
\varphi
\,
I_{1-s}\big(|\cdot|^{p-2}\,d\,(\chi-\mathbf 1_{B_\delta})+G\big)
\di x
\\
&=
\int_{\R^N}
\big(dV_\delta+H_0\big)
\varphi
\di x
\end{split}
\end{equation}
for every
$\varphi\in C^\infty_c(B_{\delta/2})$,
where we have set $H_0=I_{1-s}\big(|\cdot|^{p-2}\,d\,(\chi-\mathbf 1_{B_\delta})+G\big)\in C^1(B_{\delta/2})$.
On the other side, given $\varphi\in C^\infty_c(B_{\delta/2})$, for every $x\in\R^N$ we can write 
\begin{equation*}
\nabla^s\varphi(x)
=
c_{N,s}
\lim_{\varepsilon\to0^+}
\int_{B_\varepsilon(x)^c}
\frac{\varphi(y)\,
(y-x)}{|y-x|^{N+s+1}}
\di y
=
c_{N,s}
\lim_{\varepsilon\to0^+}
\int_{B_{\delta/2}\setminus B_\varepsilon(x)}\frac{\varphi(y)
\,
(y-x)}{|y-x|^{N+s+1}}\di y,
\end{equation*} 
where $c_{N,s}>0$ is the constant appearing in~\eqref{eq:def_nablas} (see~\cite{comi-stefani19}*{Sec.~2.2}).
Hence, since $A_1\in L^\infty(\R^N;\R^N)$,  $A_1=0$ on $B_{2\delta}$, and 
\begin{equation}
\label{eq:distanti}
|x-y|\ge\frac32\delta
\quad
\text{whenever}\
x\in B_{2\delta}^c\ 
\text{and}\
y\in B_{\delta/2},
\end{equation}
owing to Tonelli's Theorem we can estimate
\begin{equation*}
\begin{split}
\bigg|
\int_{\R^N}A_1(x)\cdot\nabla^s\varphi\di x
\bigg|
&\le 
c_{N,s}
\int_{B_{2\delta}^c}
|A_1(x)|
\int_{B_{\delta/2}}
\frac{|\varphi(y)|}{|y-x|^{N+s}}
\di y
\di x
\\
&\le 
c_{N,s}
\,
\|A_1\|_{L^\infty}
\,
\|\varphi\|_{L^1(B_{\delta/2})}
\int_{B_{3\delta/2}^c}\frac{\,\di h}{|h|^{N+s}}
<\infty.
\end{split}
\end{equation*}
Hence, using~\eqref{eq:distanti} and Fubini's Theorem, we obtain that
\begin{equation}
\label{eq:acca2}
\begin{split}
\int_{\R^N}A_1\cdot\nabla^s\varphi\di x 
&=
c_{N,s}
\int_{B_{2\delta}^c}
A_1(x)
\cdot
\int_{B_{\delta/2}}
\frac{\varphi(y)
\,(y-x)}{|y-x|^{N+s+1}}\di y
\di x
\\
&=
c_{N,s}
\int_{B_{\delta/2}}
\varphi(y)
\int_{B_{2\delta}^c}
\frac{A_1(x)
\cdot(y-x)}{|y-x|^{N+s+1}}
\di x
\di y
=
\int_{\R^N}\varphi\,H_1\di x
\end{split}
\end{equation}
where, for every $x\in B_{\delta/2}$, we have set
\begin{equation*}
H_1(x)
=
c_{N,s}
\int_{B_{2\delta}^c}
\frac{A_1(y)
\cdot(x-y)}{|x-y|^{N+s+1}}
\di y.
\end{equation*}
Again thanks to~\eqref{eq:distanti}, differentiating under the integral sign readily gives $H_1\in C^1(B_{\delta/2})$.
By combining~\eqref{eq:acca1} and~\eqref{eq:acca2} and defining $H=H_0+H_1\in C^1(B_{\delta/2})$, we get~\eqref{eq:local-first-variation}.
\end{proof}

\begin{lemma}\label{lem:short-translation}
Let $\delta>0$ and $V_\delta$ be as in \cref{lem:local-first-variation}.
Assume that $\psi\in C^\infty_c(B_1)$ is a non-negative and radially symmetric decreasing function such that $\int_{\R^N}\psi\,\di x=1$.
Then, for every $e\in \mathbb S^{N-1}$, there exists $c>0$ such that
\begin{equation}\label{eq:VR-translation}
(V_\delta*\psi_r)(0)
-
(V_\delta*\psi_r)(re)
\ge
cr^{p-1-s}
\quad
\text{for every}\
r\in\big(0,\tfrac\delta2\big), 
\end{equation}
where $\psi_r(x)=r^{-N}\psi(x/r)$ for every $x\in\R^N$ and $r>0$.
\end{lemma}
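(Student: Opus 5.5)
The plan is to reduce \eqref{eq:VR-translation} by scaling to a single inequality at scale $1$ with a truncation radius $R=\delta/r>2$. I then prove that inequality with a lower bound uniform in $R$, by a reflection argument that uses only the radial monotonicity of the two factors.

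\emph{Step 1: scaling.} Set $\rho_R=|\cdot|^{p-2}\mathbf 1_{B_R}$, so that $V_\delta=I_{1-s}\rho_\delta$. The kernel of $I_{1-s}$ is $K(z)=\kappa_{N,s}|z|^{s+1-N}$, with $1-s\in(0,N)$, and $\rho_R\in L^1(\R^N)$ because $p-2>-1\ge -N$. The change of variables $y=rz$ gives $\rho_\delta(rz)=r^{p-2}\rho_{\delta/r}(z)$ and $V_\delta(rx)=r^{p-1-s}(I_{1-s}\rho_{\delta/r})(x)$. Hence
\begin{equation*}
(V_\delta*\psi_r)(0)-(V_\delta*\psi_r)(re)
=
r^{p-1-s}\Big[(I_{1-s}\rho_R*\psi)(0)-(I_{1-s}\rho_R*\psi)(e)\Big],
\qquad R=\delta/r>2.
\end{equation*}
It therefore suffices to bound the bracket from below by some $c>0$ that does not depend on $R\ge2$.

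\emph{Step 2: rewrite the bracket.} Let $\Phi=K*\psi$. Since $\psi$ and $K$ are both even, Fubini gives
\begin{equation*}
D_R:=(I_{1-s}\rho_R*\psi)(0)-(I_{1-s}\rho_R*\psi)(e)=\int_{\R^N}\rho_R(y)\big(\Phi(y)-\Phi(y-e)\big)\di y.
\end{equation*}
Here $\Phi$ is radially symmetric, and it is \emph{strictly} radially decreasing: it is the convolution of the nonzero radially symmetric decreasing function $\psi$ with the strictly radially decreasing kernel $K$. One can see this either from the strict version of the Riesz rearrangement inequality, or directly by writing $\Phi(x)=\int\psi(z)K(x-z)\di z$ and comparing $x$ with its reflections.

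\emph{Step 3: reflection.} Let $T$ be the reflection across the hyperplane $\{y\cdot e=\tfrac12\}$, so that $Ty=y-(2y\cdot e-1)e$, and let $\Pi=\{y\cdot e<\tfrac12\}$. Since $T$ is an isometry that swaps $0$ and $e$, we have $|Ty-e|=|y|$ and $|Ty|=|y-e|$. Splitting the integral over $\Pi$ and $T\Pi$ and substituting $y\mapsto Ty$ on the second half gives
\begin{equation*}
D_R=\int_{\Pi}\big(\rho_R(y)-\rho_R(Ty)\big)\big(\Phi(y)-\Phi(y-e)\big)\di y .
\end{equation*}
For $y\in\Pi$ we have $|y|<|y-e|=|Ty|$. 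Both $\rho_R$ and $\Phi$ are radially decreasing, so both factors are nonnegative on $\Pi$ and the integrand is $\ge0$. I then restrict to $\Pi\cap B_1$. There $\rho_R(y)=|y|^{p-2}$ because $R>1$, and
\begin{equation*}
\rho_R(Ty)\le|Ty|^{p-2}<|y|^{p-2},
\end{equation*}
so $\rho_R(y)-\rho_R(Ty)\ge |y|^{p-2}-|Ty|^{p-2}>0$. Also $\Phi(y)-\Phi(y-e)>0$ by strict monotonicity. This yields
\begin{equation*}
D_R\ge c:=\int_{\Pi\cap B_1}\big(|y|^{p-2}-|Ty|^{p-2}\big)\big(\Phi(y)-\Phi(y-e)\big)\di y>0,
\end{equation*}
which does not depend on $R$. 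By rotation invariance $c$ does not depend on $e$ either. This proves \eqref{eq:VR-translation}.

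\emph{Main obstacle.} The step needing care is the strict radial decrease of $\Phi=I_{1-s}\psi$, which is what makes $c$ positive. I would prove it directly: for $|x_1|<|x_2|$, reflect across the bisecting hyperplane of $x_1$ and $x_2$ and use that $\psi$ is radially decreasing with positive mass while $K$ is strictly decreasing. The other technical points are minor: integrability of $\rho_R$ and $K$ near the origin, which holds since $p-2>-N$ and $s+1-N>-N$, and the use of Fubini when exchanging convolutions.
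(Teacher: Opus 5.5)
Your proof is correct, but it takes a different route from the paper's.

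\textbf{How the paper argues.} The paper never rescales $V_\delta$ as a whole. It applies the layer-cake formula
$|y|^{p-2}\mathbf 1_{B_\delta}=\delta^{p-2}\mathbf 1_{B_\delta}+(2-p)\int_0^\delta t^{p-3}\mathbf 1_{B_t}\di t$,
so that $V_\delta$ becomes a superposition of the functions $P_t=I_{1-s}\mathbf 1_{B_t}$. Each $P_t*\psi_r$ is strictly radially decreasing, so every layer contributes nonnegatively to the difference. The paper then discards all layers except $t\in(r,2r)$ and rescales that band to get $r^{p-1-s}(2-p)\int_1^2\tau^{p-3}\bigl(P_\tau*\psi(0)-P_\tau*\psi(e)\bigr)\di\tau$.

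\textbf{How you argue.} You rescale first to a truncation radius $R=\delta/r$. You then pair $\rho_R$ against $\Phi-\Phi(\cdot-e)$ with $\Phi=I_{1-s}\psi$. Nonnegativity comes from antisymmetrizing across the bisecting hyperplane of $0$ and $e$, and you keep only $\Pi\cap B_1$.

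\textbf{Comparison.} Both arguments rest on one monotonicity principle: for a radially nonincreasing weight $\rho$, $\int\rho(y)\bigl(\Phi(y)-\Phi(y-e)\bigr)\di y\ge0$. The paper gets it by superposing balls; you prove it directly by reflection.
\begin{itemize}
\item Your version needs only one strict monotonicity fact, for $I_{1-s}\psi$; the weight only has to be nonincreasing.
\item Your constant is explicit and visibly uniform in $R$, and in fact it works for all $r<\delta$.
\item You sketch a proof of the strict decrease, which the paper simply asserts for $P_t*\psi_r$.
\item The paper's version is shorter, because rescaling a single band of layers is immediate.
\end{itemize}

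\textbf{One slip to fix.} The kernel of $I_{1-s}$ is $\kappa_{N,s}|z|^{1-s-N}$, not $|z|^{s+1-N}$. With the exponent as you wrote it, the scaling in Step~1 would give $r^{p-1+s}$ instead of $r^{p-1-s}$. Also, for $N=1$ the kernel would be $|z|^{s}$, which is increasing, and this would destroy the strict decrease of $\Phi$ on which $c>0$ rests. With the correct exponent $1-s-N<0$, all your computations go through as written.
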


\begin{proof}
By the layer-cake identity, for every $y\in\R^N$, we can write
\begin{equation*}
|y|^{p-2}
\,
\mathbf1_{B_\delta}(y)
=
\delta^{p-2}
\,
\mathbf1_{B_\delta}(y)
+
(2-p)
\int_0^\delta
t^{p-3}
\,
\mathbf1_{B_t}(y)\di t
\end{equation*}
Hence, given $e\in\mathbb S^{N-1}$ and $r>0$, by Tonelli's Theorem we get that
\begin{equation*}
V_\delta
=
I_{1-s}\big(|\cdot|^{p-2}\,\mathbf1_{B_\delta}\big)
=
\delta^{p-2}
I_{1-s}\mathbf1_{B_\delta}
+
(2-p)
\int_0^\delta
t^{p-3}
I_{1-s}\mathbf 1_{B_t}\di t.
\end{equation*}
Setting $P_t=I_{1-s}\mathbf1_{B_t}$ for every $t>0$ for brevity, the above identity rewrites as
\begin{equation}
\label{eq:combo}
V_\delta
=
\delta^{p-2}
P_\delta
+
(2-p)
\int_0^\delta
t^{p-3}
P_t
\di t
.
\end{equation}
Now, since $P_t$ is radially strictly decreasing for every $t>0$ and $\psi_r$ is non-trivial, non-negative and radially symmetric decreasing, $P_t*\psi_r$ is radially strictly decreasing for each $t>0$.
Thus, by~\eqref{eq:combo} and Tonelli's Theorem again, and assuming $2r<\delta$, we can estimate
\begin{equation*}
\begin{split}
(V_\delta*\psi_r)(0)
-
(V_\delta*\psi_r)(re)
&=
\delta^{p-2}
\big(
P_\delta*\psi_r(0)-P_\delta*\psi_r(re)
\big)
\\
&\quad+
(2-p)
\int_0^\delta
t^{p-3}
\big(P_t*\psi_r(0)-P_t*\psi_r(re)\big)
\di t
\\
&\ge
(2-p)
\int_r^{2r}
t^{p-3}
\big(P_t*\psi_r(0)-P_t*\psi_r(re)\big)
\di t.
\end{split}
\end{equation*}
By changing variables, for every $t,r>0$ and $x\in\R^N$ we have that  
\begin{equation*}
(P_{t}*\psi_r)(x)
=
r^{1-s}(P_{t/r}*\psi)(x/r).
\end{equation*}
Therefore, a change of variable yields that 
\begin{equation*}
\int_r^{2r}
t^{p-3}
\big(P_t*\psi_r(0)-P_t*\psi_r(re)\big)
\di t
=
r^{p-1-s}
\int_1^2
\tau^{p-3}
\big(P_\tau*\psi(0)-P_\tau*\psi(e)\big)
\di\tau
\end{equation*}
and thus, for every $r<\frac\delta2$, 
\begin{equation*}
(V_\delta*\psi_r)(0)
-
(V_\delta*\psi_r)(re)
\ge
cr^{p-1-s}, 
\end{equation*}
where 
\begin{equation*}
c
=
(2-p)
\int_1^2
\tau^{p-3}
\big(P_\tau*\psi(0)-P_\tau*\psi(e)\big)
\di\tau>0,
\end{equation*}
proving~\eqref{eq:VR-translation} and concluding the proof.
\end{proof}

\begin{proof}[Proof of \cref{res:big}]
Fix $p\in(1,2)$ and let $f\in C^\infty_c(\R^N)$ be as in \cref{lem:local-gradient}, $\delta>0$ and $H\in C^1(B_{\delta/2})$ be as in \cref{lem:local-first-variation}, and $\psi\in C^\infty_c(\R^N)$ be as in \cref{lem:short-translation}.
Fix $e\in\mathbb S^{N-1}$ and, for every $r>0$, define
\begin{equation}
\label{eq:def_mer}
M_e(r)
=
\int_{\R^N}
|\nabla^sf|^{p-2}\,\nabla^s f\cdot
\nabla^s
\bigl(
\psi_r-\psi_r(\cdot-re)
\bigr)
\di x.
\end{equation}
Since $\psi$ has compact support, both functions $\psi_r$ and $\psi_r(\cdot-re)$ are supported in $B_{\delta/2}$ for every $r>0$ sufficiently small.
By \cref{lem:local-first-variation,lem:short-translation}, we hence infer that 
\begin{equation}
\label{eq:mer_cc}
\begin{split}
M_e(r)
&=
d\,\Big(
(V_\delta*\psi_r)(0)
-
(V_\delta*\psi_r)(re)
\Big)
+
\int_{\R^N}
\psi(z)
\bigl[
H(rz)-H(rz+re)
\bigr]
\di z
\\
&\geq
c r^{p-1-s}-Cr
\end{split}
\end{equation}
for every $r>0$ sufficiently small, for some constants $c,C>0$ independent of $r$.
Since $p-1-s<1$, we thus get that
\begin{equation}
\label{eq:mer_est}
M_e(r)\ge\widetilde cr^{p-1-s}
\end{equation}
for every $r>0$ sufficiently small, where $\widetilde c>0$ depends on $c$ and $C$ in~\eqref{eq:mer_cc} only.
For such $r>0$, we define the families $(u_\varepsilon)_{\varepsilon>0},(v_\varepsilon)_{\varepsilon>0}\subset C^\infty_c(\R^N)$ by letting  $u_\varepsilon=f+\varepsilon\psi_r(\cdot-re)$ and $v_\varepsilon=f+\varepsilon\psi_r$ for every $\varepsilon>0$.
Up to taking $r>0$ smaller if necessary, for every $\varepsilon>0$ both $u_\varepsilon$ and $v_\varepsilon$ are supported in~$B_3$.
Moreover, for every $\varepsilon>0$, we have  $u_\varepsilon,v_\varepsilon\ge0$ and 
\begin{equation*}
|\set*{u_\varepsilon>\lambda}|
=
|\set*{v_\varepsilon>\lambda}|
=
\begin{cases}
|\set*{f>\lambda}| 
& \text{for}\ \lambda\in(0,1)
\\[1ex]
|\set*{\varepsilon\psi_r>\lambda-1}|
& \text{for}\ \lambda\ge1.
\end{cases}
\end{equation*}
Therefore, since $v_\varepsilon$ is radially symmetric decreasing because  both~$f$ and~$\psi$ are by the assumptions in \cref{lem:local-gradient,lem:short-translation}, we infer that $u_\varepsilon^\star=v_\varepsilon$ for every $\varepsilon>0$.
By applying~\eqref{eq:ele} in \cref{res:ele} below with $\xi=\nabla^s f$ and $\eta=\varepsilon\nabla^s\psi_r$, we get 
\begin{equation*}
\begin{split}
\|\nabla^s u_\varepsilon^\star\|^p_{L^p}
&=
\int_{\R^N}
|\nabla^s v_\varepsilon|^p
\di x
\ge 
\int_{\R^N}
|\nabla^sf|^p
\di x
+
p\varepsilon\int_{\R^N}
|\nabla^s f(x)|^{p-2}
\,
\nabla^s f(x)\cdot\nabla^s\psi_r(x)
\di x
,
\end{split}
\end{equation*}
while, by applying~\eqref{eq:ele} below with $\xi=\nabla^s f$ and $\eta=\varepsilon\nabla^s\psi_r(\cdot-re)$, we obtain 
\begin{equation*}
\begin{split}
\|\nabla^s u_\varepsilon\|^p_{L^p}
&= 
\int_{\R^N}
|\nabla^s f(x)+\varepsilon\nabla^s\psi_r(x-re)|^p
\di x
\\
&\le 
\int_{\R^N}
|\nabla^s f(x)|^p
\di x
+
C_p\varepsilon^p
\int_{\R^N}
|\nabla^s\psi_r(x-re)|^p
\di x
\\
&\quad
+
p\varepsilon\int_{\R^N}
|\nabla^s f(x)|^{p-2}
\,
\nabla^s f(x)\cdot\nabla^s\psi_r(x-re)
\di x.
\end{split}
\end{equation*}
As a consequence, recalling~\eqref{eq:def_mer},  we can estimate
\begin{equation*}
\begin{split}
\|\nabla^s u_\varepsilon^\star\|^p_{L^p}
-
\|\nabla^s u_\varepsilon\|^p_{L^p}
&\ge
p\varepsilon\int_{\R^N}
|\nabla^s f(x)|^{p-2}
\,
\nabla^s f(x)\cdot\big(\nabla^s\psi_r(x)-\nabla^s\psi_r(x-re)\big)
\di x
\\
&\quad-
C_p\varepsilon^p
\int_{\R^N}
|\nabla^s\psi_r(x-re)|^p
\di x
\\
&=
p\varepsilon M_e(r)
-
C\varepsilon^p
\|\nabla^s\psi_r\|_{L^p}^p
.
\end{split}
\end{equation*}
Observing that, by the homogeneity of the fractional gradient, 
\begin{equation*}
\|\nabla^s\psi_r\|_{L^p}^p
=
r^{-N(p-1)-sp}
\,
\|\nabla^s\psi\|_{L^p}^p
\end{equation*}
and recalling~\eqref{eq:mer_est}, we get that, for every $r>0$ sufficiently small and for every $\varepsilon>0$,
\begin{equation*}
\|\nabla^su_\varepsilon^\star\|^p_{L^p}
-
\|\nabla^s u_\varepsilon\|^p_{L^p}
\ge 
\widetilde c\varepsilon r^{p-1-s}
-
\widetilde C\varepsilon^p
r^{-N(p-1)-sp},
\end{equation*}
where $\widetilde c>0$ is as in~\eqref{eq:mer_est} and $\widetilde C>0$ is independent of $r$ and $\varepsilon$. 
Hence fixing $r>0$ and choosing $\varepsilon>0$ small enough to ensure that $\widetilde C\varepsilon^{p-1}<\widetilde c r^{p-1-s+N(p-1)+sp}$ gives that $\|\nabla^su_\varepsilon^\star\|^p_{L^p}
-
\|\nabla^s u_\varepsilon\|^p_{L^p}>0$, concluding the proof.
\end{proof}

In the proof of \cref{res:big} we exploited the following elementary inequality.
We provide a brief proof of it for the reader's convenience.

\begin{lemma}
\label{res:ele}
For every $p\in(1,2)$, there exists $C_p>0$ such that
\begin{equation}
\label{eq:ele}
0\le |\xi+\eta|^p-|\xi|^p-p|\xi|^{p-2}\xi\cdot\eta
\le C_p|\eta|^p
\quad
\text{for every}\ \xi,\eta\in\R^N.
\end{equation}
\end{lemma}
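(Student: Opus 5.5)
The lower bound is just convexity of $\Phi(z)=|z|^p$ on $\R^N$ for $p>1$. The function $\Phi$ is $C^1$ with $\nabla\Phi(\xi)=p|\xi|^{p-2}\xi$, interpreted as $0$ at $\xi=0$, which is legitimate since $p>1$. Hence
\begin{equation*}
\Phi(\xi+\eta)\ge\Phi(\xi)+\nabla\Phi(\xi)\cdot\eta ,
\end{equation*}
which is the left inequality in~\eqref{eq:ele}.

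For the upper bound, set $R(\xi,\eta)=|\xi+\eta|^p-|\xi|^p-p|\xi|^{p-2}\xi\cdot\eta$ and use homogeneity. Since $R(t\xi,t\eta)=t^pR(\xi,\eta)$ for $t>0$, and $R(\xi,0)=0$, it suffices to bound $R(\xi,\eta)$ for $|\eta|=1$ and arbitrary $\xi\in\R^N$. I would split into two regimes.

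\emph{Bounded regime $|\xi|\le 2$.} Here $R$ is continuous in $\xi$ on the compact set $\overline{B_2}\times\mathbb S^{N-1}$. Continuity at $\xi=0$ holds because $|\xi|^{p-2}\xi\to0$ as $\xi\to0$ when $p>1$. Hence $R$ is bounded there by some constant.

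\emph{Large regime $|\xi|\ge2$.} The segment $\xi+t\eta$, $t\in[0,1]$, stays in $\{|z|\ge1\}$, where $\Phi$ is smooth with $|D^2\Phi(z)|\le C|z|^{p-2}\le C$, using $p<2$. Taylor's formula with integral remainder then gives $R\le \tfrac12\sup|D^2\Phi|\,|\eta|^2\le C$.

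Combining the two regimes gives $R\le C_p$ when $|\eta|=1$, and scaling yields $R(\xi,\eta)\le C_p|\eta|^p$ in general. The only delicate point is the non-smoothness of $\Phi$ at the origin. The homogeneity reduction handles it, since it confines the origin to the compact regime where continuity alone suffices. Note that $p<2$ is used exactly to make $|z|^{p-2}$ bounded away from the origin.
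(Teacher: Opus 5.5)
Your proof is correct, but it takes a different route from the paper's.

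The paper uses the Fundamental Theorem of Calculus to write the remainder as $p\int_0^1\big(|\xi+t\eta|^{p-2}(\xi+t\eta)-|\xi|^{p-2}\xi\big)\cdot\eta\di t$. This reduces everything to the $(p-1)$-H\"older continuity of the vector field $a\mapsto|a|^{p-2}a$, namely $\big||a|^{p-2}a-|b|^{p-2}b\big|\le C_p|a-b|^{p-1}$. That estimate is proved by splitting into two cases:
\begin{itemize}
\item if $|a-b|\ge\frac12\max\{|a|,|b|\}$, a crude triangle-inequality bound suffices;
\item otherwise, the mean value theorem with $|D(|z|^{p-2}z)|\le C_p|z|^{p-2}$ applies, and this is where $p<2$ enters.
\end{itemize}

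You instead work directly with the second-order remainder $R$. The exact $p$-homogeneity $R(t\xi,t\eta)=t^pR(\xi,\eta)$ lets you normalize $|\eta|=1$. You then handle the region near the singularity of $\Phi$ by joint continuity on a compact set. The far region you handle by Taylor's formula with $|D^2\Phi(z)|\le p|z|^{p-2}\le p$ on $\{|z|\ge1\}$.

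Undoing the scaling, your dichotomy $|\xi|\le2|\eta|$ versus $|\xi|\ge2|\eta|$ is the same kind of split as the paper's, so the underlying mechanism is similar. The difference lies in what each route delivers:
\begin{itemize}
\item The paper's approach yields, along the way, the stronger and reusable H\"older estimate for $|z|^{p-2}z$, with constants that are in principle explicit.
\item Your approach skips that auxiliary inequality and is somewhat shorter and more transparent about where $p<2$ is used. The price is a non-explicit constant in the compact regime. You could make it explicit at no cost, since for $|\xi|\le2$ and $|\eta|=1$ one has $R\le|\xi+\eta|^p+p|\xi|^{p-1}\le3^p+p\,2^{p-1}$.
\end{itemize}
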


\begin{proof}
The lower bound follows from the convexity of $z\mapsto |z|^p$. 
For the upper bound, by the Fundamental Theorem of Calculus, we can estimate
\begin{equation*}
\begin{split}
|\xi+\eta|^p-|\xi|^p-p|\xi|^{p-2}\xi\cdot\eta
&
=
p\int_0^1
\big(|\xi+t\eta|^{p-2}(\xi+t\eta)-|\xi|^{p-2}\xi\big)
\cdot\eta
\di t
\\
&
\le
C_p|\eta|^p\int_0^1t^{p-1}\,dt
\le C_p|\eta|^p
\end{split}
\end{equation*}
where we used the elementary inequality
\begin{equation}
\label{eq:ele2}
\big||a|^{p-2}a-|b|^{p-2}b\big|
\le C_p|a-b|^{p-1},
\quad 
\text{for every}\ 
a,b\in\R^N.
\end{equation}
To prove~\eqref{eq:ele2}, we distinguish two cases.
If
$|a-b|\ge \frac12\max\set*{|a|,|b|}$,
then
\begin{equation*}
||a|^{p-2}a-|b|^{p-2}b|
\le |a|^{p-1}+|b|^{p-1}
\le 
2^p|a-b|^{p-1}.
\end{equation*}
If instead $|a-b|<\frac12\max\set*{|a|,|b|}$ then we may suppose that $|a|\ge |b|$ without loss of generality and thus $
|a-b|<\frac12|a|$.
Hence, for every $t\in[0,1]$, we have
\begin{equation*}
|a+t(b-a)|
\ge |a|-t|a-b|
\ge |a|-|a-b|
> \frac12|a|.
\end{equation*}
Thus, again by the Fundamental Theorem of Calculus applied to $z\mapsto|z|^{p-2}z$, we get
\begin{equation*}
\big||a|^{p-2}a-|b|^{p-2}b\big|
\le 
C_p|a-b|
\int_0^1 |a+t(b-a)|^{p-2}
\di t
\le 
C_p|a|^{p-2}|a-b|,
\end{equation*}
since $
\|\nabla(|z|^{p-2}z)\|\le C_p|z|^{p-2}$ for every $z\in\R^N\setminus\set*{0}$.
Recalling that $p-2<0$ and that $|a|>2|a-b|$ by our previous assumption, we infer that $|a|^{p-2}\le C_p|a-b|^{p-2}$ and so the desired inequality~\eqref{eq:ele2} follows, concluding the proof.
\end{proof}


\end{document}